\newtheorem{theorem}{Theorem}[section]
\theoremstyle{definition}
 \newtheorem{lemm}{Lemma}
 \newtheorem{propi}{Proposition}
 \theoremstyle{definition}
 \newtheorem{satz}{Theorem}
 \newtheorem{question}{Question}
 \theoremstyle{remark}
 \newtheorem*{clm}{Claim}
\theoremstyle{remark}
\newtheorem{remark}[theorem]{Remark}
 \newcommand{\la}{\langle}
 \newcommand{\ra}{\rangle}
 \newcommand{\rest}[1]{\upharpoonright{#1}}
 \newcommand{\dom}{\hbox{dom}}
 \newcommand{\M}{\mathcal{M}}
 \newcommand{\N}{\mathcal{N}}
 \newcommand{\Ca}{\mathcal{C}}
 \newcommand{\PP}{\mathbb{P}}
 \newcommand{\QQ}{\mathbb{Q}}
 \newcommand{\RR}{\mathbb{R}}
 \newcommand{\KK}{\mathbb{K}}
  \newcommand{\LL}{\mathcal{L}} 
\newcommand{\w}{\omega}
\DeclareMathOperator{\lh}{lh}
\DeclareMathOperator{\cof}{cof}
\title[Projective maximal families of orthogonal measures with large continuum]{Projective maximal families of orthogonal measures\\ with large continuum}
\author[V Fischer]{Vera Fischer} 
\address{Kurt G\"odel Research Center, University of Vienna, W\"ahringer Strasse 25, A-1090 Vienna, Austria} 
\email{vfischer@logic.univie.ac.at}
\author[S D Friedman]{Sy David Friedman} 
\address{Kurt G\"odel Research Center, University of Vienna, W\"ahringer Strasse 25, A-1090 Vienna, Austria} 
\email{sdf@logic.univie.ac.at}
\author[A Tornquist]{Asger T\"ornquist} 
\address{Kurt G\"odel Research Center, University of Vienna, W\"ahringer Strasse 25, A-1090 Vienna, Austria} 
\email{asger@logic.univie.ac.at}
\keywords{families of orthogonal measures, projective sets, projective wellorders}
\begin{document}

\begin{abstract}
We study maximal orthogonal families of Borel probability measures on $2^\omega$ (abbreviated m.o. families) and show that there
are generic extensions of the constructible universe $L$ in which each of the following holds:

(1) There is a $\Delta^1_3$-definable well order of the reals, there is a $\Pi^1_2$-definable m.o. family, there are no $\mathbf{\Sigma}^1_2$-definable
m.o. families and $\mathfrak{b}=\mathfrak{c}=\omega_3$ (in fact any reasonable value of $\mathfrak{c}$ will do).

(2) There is a $\Delta^1_3$-definable well order of the reals, there is a $\Pi^1_2$-definable m.o. family, there are no $\mathbf{\Sigma}^1_2$-definable
m.o. families, $\mathfrak{b}=\omega_1$ and $\mathfrak{c}=\omega_2$.

\end{abstract}

\maketitle

\section{Introduction}

Let $X$ be a Polish space, and let $P(X)$ denote the Polish space of Borel probability measures on $X$, in the sense of \cite[17.E]{KECHRIS}. Recall that if $\mu,\nu\in P(X)$ then $\mu$ and $\nu$ are said to be \emph{orthogonal}, written $\mu\bot\nu$, if there is a Borel set $B\subseteq X$ such that $\mu(B)=0$ and $\nu(X\setminus B)=0$. A set of measures $\mathcal A\subseteq P(X)$ is said to be \emph{orthogonal} if whenever $\mu,\nu\in\mathcal A$ and $\mu\neq\nu$ then $\mu\perp\nu$. A \emph{maximal orthogonal family}, or \emph{m.o.~family}, is an orthogonal family $\mathcal A\subseteq P(X)$ which is maximal under inclusion.

The present paper is concerned with the study of \emph{definable} m.o.~families. A well-known result to Preiss and Rataj \cite{PR} states that there are no analytic m.o.~families, and in a recent paper \cite{VFAT} it was shown by Fischer and T\"ornquist that if all reals are constructible then there is a $\Pi^1_1$ m.o.~family. The latter paper also raised the question how restrictive the existence of a definable m.o.~family is on the structure of the real line, since it was shown that $\Pi^1_1$ m.o. families cannot coexist with Cohen reals.

In the present paper we study $\Pi^1_2$ m.o.~families in the context of $\mathfrak c\geq\omega_2$, with the additional requirement that there is a $\Delta^1_3$-definable wellorder of $\RR$. Our main results are:

\begin{satz}\label{TH1} It is consistent with $\mathfrak{c}=\mathfrak{b}=\omega_3$ that there is a $\Delta^1_3$-definable wellorder
of the reals, a $\Pi^1_2$ definable maximal orthogonal family of measures and there are no $\mathbf{\Sigma}^1_2$-definable maximal sets of orthogonal measures.
\end{satz}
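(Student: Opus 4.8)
The plan is to force over $L$ with a finite-support, ccc iteration $\langle \PP_\alpha, \dot{\QQ}_\alpha : \alpha < \omega_3\rangle$ of length $\omega_3$ whose iterands, organised by a bookkeeping function, interleave three tasks: (i) Jensen-style almost-disjoint coding that writes the order in which reals are added into definable coding areas along the $L$-hierarchy, yielding a $\Delta^1_3$ wellorder; (ii) a measure-coding step that adds a measure orthogonal to all measures produced so far and codes it, so that membership in the resulting family is $\Pi^1_2$; and (iii) a dominating (Hechler) step, adding cofinally many dominating reals so that $\mathfrak{b}=\mathfrak{c}=\omega_3$. Since all iterands are ccc and of size $<\omega_3$, the iteration is ccc, preserves cardinals, and forces $2^\omega=\omega_3$ (any prescribed regular value of $\mathfrak{c}$ being obtained by adjusting the length). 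A family of $<\omega_3$ reals is captured at some stage $\alpha<\omega_3$, whence the next dominating real bounds it, giving $\mathfrak{b}=\omega_3$; the bookkeeping must moreover enumerate, cofinally often, every nice name for a real, so that each measure appearing in the extension is eventually confronted with the family.

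For the wellorder I would use the coding apparatus of Fischer--Friedman: decoding is witnessed locally by countable models, and a routine computation gives a $\Sigma^1_3$ wellorder $<_w$. Because $<_w$ is a genuine (total) wellorder, $x<_w y \iff x\neq y \wedge \neg(y<_w x)$, and the right-hand side is $\Pi^1_3$; hence $<_w$ is $\Delta^1_3$. This $<_w$ also supplies the scaffolding along which the orthogonal family is built.

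The heart of the construction is the $\Pi^1_2$ maximal orthogonal family $\A=\bigcup_{\alpha<\omega_3}\A_\alpha$, obtained by lifting the $L$-construction of \cite{VFAT} through the iteration. At the measure-coding stages I would add a measure $\mu$ orthogonal to all members of the current $\A_\alpha$ (the analogue of choosing the $<_w$-least orthogonal measure) and imprint its membership, via a fixed recursive almost-disjoint family, into a real that later reals must respect. The payoff is that ``$\mu\in\A$'' becomes equivalent to ``every countable transitive model $M$ containing the relevant codes and correctly computing the coding satisfies $\mu\in\A^M$''; the outer universal quantifier over reals coding such well-founded, correct $M$ is $\Pi^1_2$, since well-foundedness and correctness are $\Pi^1_1$. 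Maximality is delivered by the bookkeeping, which at a later stage either finds each $\nu$ non-orthogonal to an existing member or places it into $\A$. The main obstacle lies exactly here: the coding must certify membership at the $\Pi^1_2$ level without ever destroying maximality, while the measures added for maximality must spoil neither the definability nor the $\Delta^1_3$ coding — so the preservation bookkeeping (that orthogonality of each $\mu$ to $\A_\alpha$ persists in the final model, and that coding reals are not accidentally reproduced) is the delicate, load-bearing part.

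Finally, to exclude $\mathbf{\Sigma}^1_2$ maximal orthogonal families I would generalise the incompatibility of $\Pi^1_1$ families with Cohen reals from \cite{VFAT}. Suppose $\B$ is $\mathbf{\Sigma}^1_2(r)$ and maximal; fix an $\aleph_1$-Suslin representation $\B=p[T]$ with $T\in L[r]$, so in any model with the correct $\omega_1$ one has $\B=\bigcup_{\xi<\omega_1}B_\xi$ with Borel codes $b_\xi\in L[r]$. Since $r$ appears at some stage $<\omega_3$, a later Hechler step adds, over a model containing $L[r]$, a real Cohen-generic over $L[r]$, from which I derive a measure $\nu$. A density argument against the codes $b_\xi\in L[r]$ shows that inside $L[r][\nu]$ the measure $\nu$ is orthogonal to every member of $p[T]^{L[r][\nu]}$. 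Now $\exists\mu\,(\mu\in p[T]\wedge\neg(\nu\perp\mu))$ is $\mathbf{\Sigma}^1_2(r,\nu)$, hence Shoenfield-absolute between $V$ and $L[r,\nu]$; as it fails in $L[r,\nu]$ it fails in $V$, so $\nu$ is orthogonal to all of $\B$ (and $\nu\notin\B$, since no measure is orthogonal to itself), contradicting maximality. I expect the delicate point here to be verifying this internal orthogonality lemma uniformly in the Suslin representation and checking that the diagonalizing measure survives the remainder of the iteration.
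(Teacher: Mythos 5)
Your positive half (the $\Delta^1_3$ wellorder, the coded $\Pi^1_2$ family, the Hechler steps for $\mathfrak b=\mathfrak c=\omega_3$) follows the paper's architecture, but your exclusion of $\mathbf{\Sigma}^1_2$ m.o.\ families contains a genuine gap, and it sits exactly at the step you flag: the ``internal orthogonality lemma'' asserting that in $L[r][\nu]$ the Cohen-derived measure $\nu$ is orthogonal to every member of $p[T]^{L[r][\nu]}$. A density argument against the codes $b_\xi\in L[r]$ only controls members of the family coded in the ground model $L[r]$; but $p[T]^{L[r][\nu]}$ will in general contain measures constructed from the generic itself, and maximality must be refuted against those as well. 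Nothing in an $\aleph_1$-Suslin representation prevents a projectively definable family from absorbing generically presented measures --- indeed the positive half of this very construction does precisely that: at coding stages the family receives a measure equivalent to the bookkept one which \emph{codes the generic real}, so new members chase the generic. The paper avoids this problem entirely by a different mechanism (Proposition \ref{PR1}): uniformize the $\Sigma^1_2$ relation $Q(x,(\nu_n))$ enumerating the countably many members of the family non-orthogonal to $\mu^x$ by a total function with $\Delta^1_2$ graph; invoke Ihoda--Shelah (a Cohen real over $L[a]$ makes $\Delta^1_2(a)$ functions Baire measurable, a random real makes them Lebesgue measurable); and then use turbulence of $E_I$ via the Kechris--Sofronidis assignment $x\mapsto\mu^x$ to conclude that the $E_I$-invariant map $x\mapsto A(x)$ is constant on a comeagre set, contradicting that all $E_I$-classes are meagre. (The random case additionally requires the paper's Lemma \ref{l.sum}, that $E_I\leq_B E_I\upharpoonright A$ for any positive-measure Borel $A$; for Theorem \ref{TH1} only the Cohen case is needed.) Note also that your worry about the diagonalizing measure ``surviving the remainder of the iteration'' dissolves in this framing: Proposition \ref{PR1} is applied once, in the final model, where for every real $a$ there is a Cohen real over $L[a]$ because the finite-support iteration adds Cohen reals cofinally.

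A secondary inaccuracy: the construction is not a ccc iteration over $L$. The Fischer--Friedman(--Zdomskyy) coding apparatus you cite requires a preliminary stage $\PP_0=\PP^0*\PP^1*\PP^2$ which shoots clubs through the complements of stationary subsets $S_\alpha\subseteq\omega_2\cap\cof(\omega_1)$, codes the resulting $Z_\alpha\subseteq\omega_2$ almost disjointly into sets $X_\alpha\subseteq\omega_1$, and localizes to the $Y_{\alpha+m}$. This stage is $\omega$-distributive (so adds no reals), but killing stationarity of subsets of $\omega_2$ cannot be done by ccc forcing. Only after this preparation does the finite-support iteration of $\sigma$-centered posets run (almost disjoint coding into the prepared $Y_{\alpha+m}$ via the definable family $\vec B$, plus the dominating steps); without it, the ``definable coding areas'' your ccc coding is supposed to write into do not exist, and the $\Pi^1_2$ membership criterion --- nonstationarity of $S_{\bar\alpha+m}$ in $L[r(\nu)]$ for $m\in\Delta(r(\nu))$, as computed in countable suitable models --- has no anchor. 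Finally, a minor omission: your family handles only non-atomic measures, so, as in the paper, the point measures must be adjoined at the end to obtain maximality in all of $P(2^\omega)$.
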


There is nothing special about $\mathfrak{c}=\omega_3$. In fact the same result can be obtained for any reasonable value of $\mathfrak{c}$.

\begin{satz}\label{TH2} It is consistent with $\mathfrak{b}=\omega_1$, $\mathfrak{c}=\omega_2$ that there is a
$\Delta^1_3$-definable wellorder of the reals, a $\Pi^1_2$
definable maximal orthogonal family of measures and there are no $\mathbf{\Sigma}^1_2$-definable maximal sets of orthogonal measures.
\end{satz}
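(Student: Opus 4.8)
The plan is to mirror the construction behind Theorem \ref{TH1}, changing only the engine of the iteration so that the continuum reaches $\omega_2$ while the bounding number stays at $\omega_1$. As for Theorem \ref{TH1} I would start from $L$ and force with a countable support iteration $\langle \PP_\alpha, \dot{\QQ}_\alpha : \alpha<\omega_2\rangle$ of proper forcings, each iterand of size at most $\omega_1$; properness together with length $\omega_2$ delivers $\mathfrak{c}=\omega_2$. The decisive difference from the $\mathfrak{b}=\mathfrak{c}$ case is that every iterand $\dot{\QQ}_\alpha$ is required to be $\omega^\omega$-bounding. By Shelah's preservation theorem, a countable support iteration of proper $\omega^\omega$-bounding forcings is again $\omega^\omega$-bounding, so no real of the final extension is unbounded over $L$; since $L\models \mathrm{CH}$, the $\omega_1$ reals of $L$ remain a dominating family, whence $\mathfrak{d}=\omega_1$ and therefore $\mathfrak{b}=\omega_1$. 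This is exactly the point where Sacks-style tree forcing (with fusion) must replace the dominating- or Cohen-type forcing that would be used to blow $\mathfrak{b}$ up in Theorem \ref{TH1}.

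Each iterand combines the two tasks carried out in Theorem \ref{TH1}: (i) a coding step that writes the $L$-definable wellorder into the reals so that it becomes $\Delta^1_3$, and (ii) a measure step that adjoins a Borel probability measure $\mu_\alpha$ orthogonal to all measures added so far, while simultaneously coding the countable sequence $\langle \mu_\beta:\beta<\alpha\rangle$ and a witness placing $\mu_\alpha$ in the family. I would realise both steps by proper, $\omega^\omega$-bounding forcings built on perfect trees with a fusion argument, so that the Sacks property carries the coding (almost-disjoint or localisation coding performed in the bounding way). A bookkeeping device running over all $\omega_2$ nice names for measures guarantees that every measure appearing in the final model is either added to $\mathcal{A}=\{\mu_\alpha:\alpha<\omega_2\}$ or is non-orthogonal to some $\mu_\beta$, which gives maximality. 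The $\Pi^1_2$ definition of $\mathcal{A}$ comes, as in Theorem \ref{TH1}, from the coding: $\mu\in\mathcal{A}$ is expressed as ``for every real coding a suitable countable model containing $\mu$, the model verifies that $\mu$ carries the prescribed generic code,'' a $\Pi^1_2$ assertion made correct by condensation along the $L$-like iteration.

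For the absence of $\mathbf{\Sigma}^1_2$ maximal families I would argue by genericity, just as in Theorem \ref{TH1}. Given a real $r$ in the final model, $r$ appears at some stage $\alpha_0<\omega_2$, and by Shoenfield absoluteness a $\mathbf{\Sigma}^1_2(r)$ set of measures has all of its members already decided in the intermediate extension by $\PP_{\alpha_0}$. The measure $\mu_{\alpha_0}$ adjoined at the next relevant stage is generic over that model, hence orthogonal to every measure in the $\mathbf{\Sigma}^1_2(r)$ set, yet is itself a genuine Borel probability measure; thus the $\mathbf{\Sigma}^1_2(r)$ set is not maximal. Since every $\mathbf{\Sigma}^1_2$ set is $\mathbf{\Sigma}^1_2(r)$ for some real $r$, no $\mathbf{\Sigma}^1_2$ m.o.\ family survives in $V^{\PP_{\omega_2}}$.

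The main obstacle is technical but essential: one must produce proper, $\omega^\omega$-bounding versions of both the wellorder-coding forcing and the orthogonal-measure forcing and verify that they still code faithfully, since the fusion and Sacks-property constraints are in tension with the freedom normally exploited both to code and to diagonalise against $\mathbf{\Sigma}^1_2$ families. Once these forcings are in hand, the preservation of properness and of $\omega^\omega$-bounding along the countable support iteration is standard, and the verification of the $\Pi^1_2$ definition and of maximality is the same bookkeeping-plus-condensation argument as for Theorem \ref{TH1}; only the iteration length and the bounding requirement differ.
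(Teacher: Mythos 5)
Your overall architecture agrees with the paper's: the proof of Theorem \ref{TH2} is indeed a countable support iteration of length $\omega_2$ over $L$ of ($S$-)proper, $^\omega\omega$-bounding coding forcings --- concretely, the club-shooting posets $Q(S_\beta)$, the localization $\LL(\phi)$ and the perfect-tree coding $\Ca(Y)$ from \cite{VFSF} --- driven by a bookkeeping function, with the $\Pi^1_2$ definition of the family and the $\Delta^1_3$ wellorder obtained exactly as in Theorem \ref{TH1}, and with $\mathfrak{b}=\omega_1$ coming from the preservation theorem for bounding iterations, as you say. (One small caveat: the iterands are not fully proper; $Q(S_\beta)$ deliberately destroys stationarity of $S_\beta$ and is only $\omega_1\setminus S_\beta$-proper, so the preservation theorem must be applied in its $S$-proper form, as the paper does via \cite{VFSF} and Goldstern's notes.)

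However, your third paragraph contains a genuine gap: the argument that no $\mathbf{\Sigma}^1_2$ m.o.\ family survives is wrong. Shoenfield absoluteness makes membership of each \emph{fixed} measure in a $\mathbf{\Sigma}^1_2(r)$ set absolute between $L[G_{\alpha_0}]$ and the final model; it does \emph{not} make the set ``decided'' at stage $\alpha_0$. A $\mathbf{\Sigma}^1_2(r)$ set typically acquires new members cofinally along the iteration, and nothing rules out that some later member is non-orthogonal to your diagonalizing measure $\mu_{\alpha_0}$. A sharp sanity check: Shoenfield applies equally to $\Pi^1_2$ membership, so if your argument were sound it would verbatim refute the $\Pi^1_2$ m.o.\ family you are simultaneously building in the very same model --- a contradiction. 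What is actually needed is a measurability input, which is the paper's Proposition \ref{PR1}: if over $L[a]$ there is a Cohen or a random real, then there is no $\Sigma^1_2(a)$ m.o.\ family (proved via the Ihoda--Shelah theorem \cite{jush89} giving measurability of the $\Delta^1_2$ uniformizing function, turbulence of $E_I$, and Lemma \ref{l.sum} reducing $E_I$ to its restriction to a positive-measure set). This creates a second problem for your plan: Cohen reals are unbounded, hence forbidden here, and Sacks-style fusion forcings with the Sacks property add no random reals, so your purely tree-forcing iteration need not contain, for every real $a$, a random real over $L[a]$. The paper resolves this by making every iterand of the form $\dot{\QQ}_\alpha=\dot{\QQ}^0_\alpha * \dot{\QQ}^1_\alpha$ with $\dot{\QQ}^0_\alpha$ the \emph{random real forcing} --- which is proper and $^\omega\omega$-bounding, hence compatible with keeping $\mathfrak{b}=\omega_1$ --- and then invokes Proposition \ref{PR1} in its random-real case. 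Without this ingredient (or a substitute for it) your model as described does not visibly satisfy the ``no $\mathbf{\Sigma}^1_2$ m.o.\ families'' clause.
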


Taken together these theorems indicate that the existence of a $\Pi^1_2$ m.o.~family does not seem to impose any severe restrictions on the structure of the real line. On the other hand, we show (Proposition \ref{PR1}) that $\Sigma^1_2$ m.o.~families cannot coexist with either Cohen or random reals, which is why in the models produced to prove Theorems \ref{TH1} and \ref{TH2} there are no $\mathbf{\Sigma}^1_2$ m.o.~families.

The theorems of this paper belong to a line of results concerning the definability of certain combinatorial objects on the real line and in particular the question of how low in the projective hierarchy such objects exist. In~\cite{ADM} Mathias showed that there is no $\Sigma^1_1$-definable maximal almost disjoint (mad) family in $[\omega]^\omega$. Assuming $V=L$, Miller obtained (see~\cite{MILL}) a $\Pi^1_1$ mad family in $[\omega]^\omega$.

The study of the existence of definable combinatorial objects on $\RR$ in the presence of a projective wellorder of the reals and $\mathfrak{c}\geq\omega_2$ was initiated in~\cite{VFSF},~\cite{SFLZ} and~\cite{VFSFLZ}. The wellorder of $\RR$ in all those models has a $\Delta^1_3$-definition, which is indeed optimal for models of $\mathfrak{c}\geq\omega_2$, since by Mansfield's  theorem (see~\cite[Theorem 25.39]{MSF}) the existence of a $\Sigma^1_2$-definable wellorder of the reals implies that all reals are constructible. The existence of a $\Pi^1_2$-definable $\omega$-mad family in $[\omega]^\omega$ in the presence of $\mathfrak{c}=\mathfrak{b}=\omega_2$ was established by Friedman and Zdomskyy in~\cite{SFLZ}. In the same paper, referring to earlier results (see~\cite{RAG} and~\cite{KASJSYZ}) they outlined the construction of a model in which $\mathfrak{c}=\omega_2$ and there is a $\Pi^1_1$-definable $\omega$-mad family: Start with the constructible universe $L$, obtain a $\Pi^1_1$-definable $\omega$ mad family and proceed with a countable support iteration of length $\omega_2$ of Miller forcing.
The techniques were further developed in~\cite{VFSFLZ} to establish a model in which there is a $\Pi^1_2$-definable $\omega$-mad family and $\mathfrak{c}=\mathfrak{b}=\omega_3$. In particular, in the models from~\cite{SFLZ} and~\cite{VFSFLZ}, there are no maximal almost disjoint families of size  $<\mathfrak{c}$ and so the almost disjointness number has a $\Pi^1_2$-witness.

The present paper combines the encoding techniques of \cite{VFAT} with the techniques of \cite{VFSF, SFLZ,VFSFLZ} to obtain Theorems \ref{TH1} and \ref{TH2}. We note that one significant difference from the situation for mad families is that m.o.~families always have size $\mathfrak{c}$ (see~\cite[Proposition 4.1]{VFAT}).

{\it Acknowledgement.} The authors would like to thank the Austrian Science Fund FWF for the generous support through
grants no. P 20835-N13 (Fischer, Friedman), and P 19375-N18 (Friedman, T\"ornquist), as well as a Marie Curie grant from the European Union no. IRG-249167 (T\"ornquist).


\section{Preliminaries}

In this section, we briefly recall the coding of probability measures on $2^\omega$ and the encoding technique for measures introduced in \cite{VFAT}.

Let $X$ be a Polish space. Recall that measures if $\mu,\nu\in P(X)$ then $\mu$ is said to be {\emph{absolutely continuous}} with respect to $\nu$, written $\mu\ll\nu$, if for all Borel subsets of $X$ we have that $\nu(B)=0$ implies that $\mu(B)=0$. Two measures $\mu,\nu\in P(2^\omega)$ are called {\emph{absolutely equivalent}}, written $\mu\approx\nu$, if $\mu\ll\nu$ and $\nu\ll\mu$.

If $s\in 2^{<\omega}$ we let $N_s=\{x\in 2^\omega:s\subseteq x\}$ be the basic neighbourhood determined by $s$. Following \cite{VFAT},  we let
$$
p(2^\omega)=\{f:2^{<\omega}\to [0,1]:f(\emptyset)=1\wedge (\forall s\in 2^{<\omega}) f(s)=f(s^\smallfrown 0)+f(s^\smallfrown 1)\}.
$$
The spaces $p(2^\omega)$ and $P(2^\omega)$ are homeomorphic via the recursively defined isomorphism $f\mapsto \mu_f$ where $\mu_f\in P(2^\omega)$ is the measure uniquely determined by requiring that $\mu_f(N_s)=f(s)$ for all $s\in 2^{<\omega}$. We call the unique real $f\in p(2^\omega)$ such that $\mu=\mu_f$ the {\emph{code}} for $\mu$. The identification of $P(2^\omega)$ and $p(2^\omega)$ allow us to use the notions of effective descriptive set theory in the space $P(2^\omega)$. For instance, the set $P_c(2^\omega)$ of all non-atomic probability measures on $2^\omega$ is arithmetical because the set $p_c(2^\omega)=\{f\in p(2^\omega):\mu_f\;\hbox{is non-atomic}\}$ is easily seen to be arithmetical, as shown in \cite{VFAT}.

We will use the method of coding a real $z\in 2^\omega$ into a measure $\mu\in P_c(2^\omega)$ introduced in~\cite{VFAT}. For convenience we repeat the construction in minimal detail. Given $\mu\in P_c(2^\omega)$ and $s\in 2^{<\omega}$ we let $t(s,\mu)$ be the lexicographically least $t\in 2^{<\omega}$ such that $s\subseteq t$, $\mu(N_{t^\smallfrown 0})>0$ and $\mu (N_{t^\smallfrown 1})> 0$, if it exists and otherwise we let $t(s,\mu)=\emptyset$. Define recursively $t^\mu_n\in 2^{<\omega}$ by letting
$t^\mu_0=\emptyset$ and $t^\mu_{n+1}=t({t^\mu_n}^\smallfrown 0,\mu)$. Since $\mu$ is non-atomic, we have $\lh(t^\mu_{n+1})>\lh(t^\mu_n)$.
Let $t^\mu_\infty=\bigcup_{n=0}^\infty t^\mu_n$. For $f\in p_c(2^\omega)$ and $n\in\omega\cup\{\infty\}$ we will write $t^f_n$ for $t^{\mu_f}_n$. Clearly the sequence $(t^f_n:n\in\omega)$ is recursive in $f$.

Define the relation $R\subseteq p_c(2^\omega)\times 2^\omega$ as follows:
\begin{align*}
R(f,z)\iff  (\forall n\in\omega) &\big( z(n)=1 \longleftrightarrow (f({t_n^f}^\smallfrown 0)=\frac{2}{3}f(t^f_n) \wedge f(t_n^\smallfrown 1)=\frac{1}{3}f(t_n))\big)\\
\wedge &\big(z(n)=0 \leftrightarrow f({t_n^f}^\smallfrown 0)=\frac{1}{3}
f(t_n^f)\wedge f({t_n^f}^\smallfrown 1)=\frac{2}{3} f(t_n^f)\big).
\end{align*}

Whenever $(f,z)\in R$ we say that {\emph{$f$ codes $z$}}. Note that $\dom(R)=\{f\in p_c(2^\omega):(\exists z) R(f,z)\}$ is $\Pi^0_1$ and so the function $r:\dom(R)\to 2^\omega$, where $r(f)=z$ if and only if $(f,z)\in R$, is also $\Pi^0_1$. If $\nu$ is a measure such that $\nu=\mu_f$ for some code $f$, then let $r(\nu)=r(f)$. The key properties of this construction is contained in the following Lemma (see \cite[Coding Lemma]{VFAT}):

\begin{lemm}
There is a recursive function $G:p_c(2^\omega)\times 2^\omega\to p_c(2^\omega)$
such that $\mu_{G(f,z)}\approx \mu_f$ and $R(G(f,z),z)$ for all
$f\in p_c(2^\omega)$ and $z\in 2^\omega$.
\end{lemm}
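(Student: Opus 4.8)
The plan is to obtain $g=G(f,z)$ from $f$ by preserving the local branching behaviour of $\mu_f$ at every node \emph{except} the coding nodes $t^f_n$, where I overwrite the split so that it records $z(n)$. Explicitly, set $\alpha_n=\tfrac23$ if $z(n)=1$ and $\alpha_n=\tfrac13$ if $z(n)=0$, and define $g$ by recursion on $\lh(s)$ with $g(\emptyset)=1$ and
\[
g(s^\smallfrown 0)=
\begin{cases}
\alpha_n\,g(s) & \text{if } s=t^f_n \text{ for some } n,\\
g(s)\cdot f(s^\smallfrown 0)/f(s) & \text{if } s\neq t^f_n \text{ for all } n \text{ and } f(s)>0,\\
0 & \text{if } f(s)=0,
\end{cases}
\]
and $g(s^\smallfrown 1)=g(s)-g(s^\smallfrown 0)$. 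An easy induction gives $g\in p(2^\omega)$ and, crucially, $g(s)=0\iff f(s)=0$, so $\mu_g$ and $\mu_f$ have exactly the same positive basic clopen sets; in particular their branching structure is identical, whence $t^g_n=t^f_n$ for every $n$.

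The heart of the argument is to upgrade this to $\mu_g\approx\mu_f$, since having the same positive clopen sets is in general far weaker than equality of null sets. Here I would exploit that the coding nodes are \emph{nested along the single branch} $t^f_\infty$: from $t^f_0=\emptyset$ and ${t^f_n}^\smallfrown 0\subseteq t^f_{n+1}$ one gets $t^f_0\subsetneq t^f_1\subsetneq\cdots$, and every $t^f_m$ with $m>n$ extends ${t^f_n}^\smallfrown 0$. Hence no coding node lies strictly below ${t^f_n}^\smallfrown 1$, so inside $B_n:=N_{{t^f_n}^\smallfrown 1}$ the recursion copies the transitions of $f$ verbatim and $g(s)=c_n f(s)$ for all $s\supseteq{t^f_n}^\smallfrown 1$, where $c_n=g({t^f_n}^\smallfrown 1)/f({t^f_n}^\smallfrown 1)>0$; that is, $\mu_g\rst B_n=c_n\,\mu_f\rst B_n$. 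The sets $B_n$ are pairwise disjoint, and a support point lying in no $B_n$ must go left at each $t^f_n$ and follow the forced non-branching path in between, forcing it to equal $t^f_\infty$; thus $2^\omega\setminus\bigcup_n B_n$ is contained in $\{t^f_\infty\}$ together with the non-support region, a set that is null for both $\mu_f$ and $\mu_g$ (non-atomicity kills the point, and the non-support region is a countable union of null clopen sets). Since $\mu_g=c_n\mu_f$ with $c_n>0$ on each $B_n$ and the remainder is null for both, $\mu_f$ and $\mu_g$ have the same null sets, i.e. $\mu_g\approx\mu_f$; and absolute equivalence to the non-atomic $\mu_f$ makes $\mu_g$ non-atomic, so $g\in p_c(2^\omega)$.

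It remains to read off $R(g,z)$ and recursiveness. At $t^g_n=t^f_n$ the construction gives $g({t^g_n}^\smallfrown 0)=\alpha_n\,g(t^g_n)$ with $\alpha_n\in\{\tfrac13,\tfrac23\}$ determined by $z(n)$, and correspondingly $g({t^g_n}^\smallfrown 1)=(1-\alpha_n)g(t^g_n)$; as $\tfrac13\neq\tfrac23$ the two clauses defining $R$ are mutually exclusive and exhaustive, so $R(g,z)$ holds. Granting that $(t^f_n:n\in\omega)$ is recursive in $f$ as noted above, $G$ is recursive: only the finitely many coding nodes of length $<\lh(s)$ are relevant to $g(s)$, which is then assembled from $z$ and finitely many values of $f$ by elementary operations, each division occurring only where the corresponding $f$-value is positive. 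The one genuinely substantive step is the passage to absolute equivalence: the naive Radon--Nikodym density of $\mu_g$ with respect to $\mu_f$ is an infinite product of ratios (new split)/(old split) over the $t^f_n$, and these ratios need not tend to $1$, so convergence is not automatic; the construction works precisely because those modifications all sit on the single branch $t^f_\infty$ and therefore disappear, off a null set, under the disjoint decomposition into the $B_n$.
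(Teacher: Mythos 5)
Your proposal is correct and takes essentially the same route as the paper, which gives no argument of its own but defers to the Coding Lemma of \cite{VFAT}: your $G$ (copy the conditional ratios of $f$ at every node, overwriting the split at each coding node $t^f_n$ to the $\tfrac13$--$\tfrac23$ pattern dictated by $z(n)$) together with the equivalence argument via the pairwise disjoint cones $N_{{t^f_n}^\smallfrown 1}$, on each of which $\mu_g$ is a positive scalar multiple of $\mu_f$ while the remainder $\{t^f_\infty\}$ plus the common null region carries no mass, is exactly the cited construction. One small repair: to get $\mu_g(\{t^f_\infty\})=0$ without circularity (you derive non-atomicity of $\mu_g$ only \emph{after} equivalence), note directly that $g$ branches left with ratio $\alpha_n\le\tfrac23$ at each of the infinitely many coding nodes, so $\mu_g(N_{t^f_n})\le(\tfrac23)^n\to 0$.
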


The proofs of Theorems~\ref{TH1} and~\ref{TH2} use the following result, which we now prove.

\begin{propi}\label{PR1}
Let $a\in\mathbb R$ and suppose that there either is a Cohen real over $L[a]$ or there is a random real over $L[a]$. Then there is no $\Sigma^1_2(a)$ m.o. family.
\end{propi}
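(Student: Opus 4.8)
The plan is to derive the relevant regularity of $\mathcal{A}$ from the hypothesis and then play it off against maximality, in the spirit of Preiss and Rataj~\cite{PR}. First I would invoke the Solovay--Judah--Shelah characterisations: the existence of a Cohen real over $L[a]$ implies that every $\Sigma^1_2(a)$ set of reals has the Baire property, while the existence of a random real over $L[a]$ implies that every $\Sigma^1_2(a)$ set of reals is Lebesgue measurable. Under the recursive identification $P(2^\omega)\cong p(2^\omega)$, a $\Sigma^1_2(a)$ m.o.~family $\mathcal{A}$ is coded by a $\Sigma^1_2(a)$ subset of the Polish space $P(2^\omega)$; hence in the Cohen case $\mathcal{A}$ has the Baire property, and in the random case $\mathcal{A}$ is measurable. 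Thus in either case $\mathcal{A}$ is a \emph{regular} m.o.~family, and it suffices to prove the ZFC statement that no m.o.~family can have the Baire property and none can be Lebesgue measurable, thereby upgrading the Preiss--Rataj theorem from ``not analytic'' to ``not regular''.

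The key technical device is that orthogonality is a Borel relation: writing $\rho(\mu,\nu)=\lim_n\sum_{|s|=n}\sqrt{\mu(N_s)\nu(N_s)}$ for the Hellinger affinity, a decreasing limit of continuous functions of the codes, one has $\mu\perp\nu$ if and only if $\rho(\mu,\nu)=0$, so $\{(\mu,\nu):\mu\perp\nu\}$ is Borel. With this in hand the strategy is to exploit regularity to approximate $\mathcal{A}$, off a meager (respectively null) set, by a Borel orthogonal family, to run the Preiss--Rataj fusion against this Borel bulk so as to construct a single measure $\nu$ orthogonal to each of its members, and finally to arrange that the same $\nu$ is orthogonal to the members lying in the small exceptional set as well. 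Because $\perp$ is Borel, the bookkeeping required to keep $\nu$ orthogonal to the measures produced at each stage of the fusion is manageable, and the measure $\nu$ so obtained contradicts the maximality of $\mathcal{A}$.

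I expect the main obstacle to be precisely this construction of the orthogonal measure in the merely regular, rather than analytic, setting: a $\Sigma^1_2(a)$ family is genuinely larger than an analytic one, and the delicate point is to show that the meager/null remainder left after the Borel approximation cannot, in concert with maximality, recapture $\nu$. It is worth stressing why the naive forcing/absoluteness route does not settle the matter here. One can reflect, using Shoenfield absoluteness together with the Borelness of $\perp$, to the fact that $\mathcal{A}\cap L[a]$ is an m.o.~family in $L[a]$, and then try to build from the generic real a measure $\nu$ for which ``$\nu$ is orthogonal to every member of $\mathcal{A}$'' — a $\Pi^1_2(a,\nu)$ statement — holds in $L[a][\nu]$ and hence, by absoluteness, in $V$. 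But maximality legitimately supplies, for each measure, its own handler, and two mutually generic measures acquire distinct, mutually orthogonal handlers, so no contradiction emerges at that level. This is exactly why the proof must instead pass through the genuine Preiss--Rataj diagonalisation, fed by the Baire property or measurability that the Cohen or random real over $L[a]$ guarantees.
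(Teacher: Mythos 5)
There are two gaps here, and each is fatal on its own. First, your regularity inference is miscalibrated. By the Ihoda--Shelah theorem \cite{jush89} (the very result the paper invokes), a \emph{single} Cohen real over $L[a]$ is equivalent to the Baire property for $\Delta^1_2(a)$ sets only; the Baire property for all $\Sigma^1_2(a)$ sets is strictly stronger (it is equivalent to the set of Cohen reals over $L[a]$ being comeager), and likewise one random real over $L[a]$ yields only $\Delta^1_2(a)$ Lebesgue measurability. So from the hypothesis of Proposition~\ref{PR1} you cannot conclude that the $\Sigma^1_2(a)$ family $\mathcal A$ itself is regular, and your opening step already fails. The paper never needs regularity of $\mathcal A$: it uses the Kechris--Sofronidis map $x\mapsto\mu^x$ (with $x E_I y\Rightarrow\mu^x\approx\mu^y$ and $\neg\, x E_I y\Rightarrow\mu^x\perp\mu^y$) together with the fact that only countably many members of an orthogonal family are non-orthogonal to a fixed measure, forms the $\Sigma^1_2$ relation $Q(x,(\nu_n))$ saying that $(\nu_n)$ enumerates those members of $\mathcal A$ with $\nu_n\not\perp\mu^x$, observes that maximality makes every section nonempty, and uniformizes to obtain a \emph{total} function $f$ whose graph is $\Delta^1_2$; the single Cohen (or random) real is applied to this graph, not to $\mathcal A$. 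The contradiction is then obtained not by a Preiss--Rataj fusion at all, but from turbulence of $E_I$: the assignment $x\mapsto A(x)=\{f(x)_n:n\in\omega\}$ is an $E_I$-invariant, Baire measurable map into countable sets of measures, hence constant on a comeager set, while every $E_I$-class is meager. In the random case one first passes to a closed positive-measure set $F$ on which $f$ is continuous and composes with a reduction of $E_I$ to $E_I\upharpoonright F$, which is exactly the content of the paper's Lemma~\ref{l.sum} -- the one genuinely new ingredient beyond \cite{VFAT}.

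Second, the ZFC statement to which you reduce everything -- that no m.o.\ family can have the Baire property or be Lebesgue measurable -- is false, so the ``main obstacle'' you flag is not a delicate point but an impossibility. Under CH (e.g.\ in $L$) one can build an m.o.\ family inside a meager set: the Coding Lemma provides a recursive $G$ with $\mu_{G(f,z)}\approx\mu_f$ and $G(f,z)\in\dom(R)$, and $\dom(R)$ is a \emph{closed nowhere dense} subset of $p_c(2^\omega)$, since the splitting ratios along the nodes $t^f_n$ are pinned to $\{\frac13,\frac23\}$ and a small perturbation at a single deep node destroys membership. A transfinite recursion of length $\omega_1$ that, whenever the current candidate non-atomic measure is orthogonal to everything chosen so far, adds its $\approx$-copy with code $G(\cdot,0^\infty)\in\dom(R)$ (orthogonality is invariant under $\approx$), combined with the closed nowhere dense set of point measures to absorb all measures with atoms, produces a maximal orthogonal family contained in the union of two nowhere dense sets; any subset of a meager set is meager and therefore has the Baire property. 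Hence there is no ``regular'' upgrade of Preiss--Rataj \cite{PR}, and no amount of bookkeeping with the Borel relation $\perp$ (your Hellinger-affinity observation is correct but beside the point) can make the fusion-plus-exceptional-set plan work: definability of $\mathcal A$, exploited through $\Sigma^1_2$ uniformization of the countable sets of ``handlers'' and then through turbulence, is essential and cannot be traded for the Baire property or measurability of the family itself.
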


We first need a preparatory Lemma. In $2^\omega$, consider the equivalence $E_I$ defined by
$$
x E_I y\iff \sum_{n=0}^\infty\frac {|x(n)-y(n)|} {n+1}<\infty.
$$
We identify $2^\omega$ with $\mathbb Z_2^\omega$ and equip it with the Haar measure $\mu$.

\begin{lemm}\label{l.sum}
Let $A\subseteq 2^\omega$ be a Borel set such that $\mu(A)>0$. Then $E_I\leq_B E_I\upharpoonright A$, where $E_I\upharpoonright A$ is the restriction of $E_I$ to A.
\end{lemm}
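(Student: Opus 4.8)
The plan is to realise $E_I$ as the coset equivalence relation of a subgroup and then build the reduction by a coordinate-coding argument, isolating the one delicate point at the very end. Identifying $2^\omega$ with $\mathbb Z_2^\omega$ and writing $x\oplus y$ for coordinatewise addition, we have $|x(n)-y(n)|=(x\oplus y)(n)$, so $x\,E_I\,y\iff x\oplus y\in I$, where $I=\{z\in 2^\omega:\sum_n z(n)/(n+1)<\infty\}$ is a Borel, dense, $\mu$-null subgroup of $(2^\omega,\oplus)$. Two facts will be used repeatedly: each translation $z\mapsto z\oplus w$ is an $E_I$-automorphism preserving $\mu$ (since $(x\oplus w)\oplus(y\oplus w)=x\oplus y$), and $E_I\supseteq E_0$, so $E_I$ is $\mu$-ergodic. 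The goal is a total Borel $f\colon 2^\omega\to A$ with $x\,E_I\,y\iff f(x)\,E_I\,f(y)$.

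\textbf{Step 2 (reduction to a relatively full set).} By the Lebesgue density theorem I would choose $t\in 2^{<\omega}$, $m=\lh(t)$, with $\mu(A\cap N_t)/\mu(N_t)>1-\eps$ for a small $\eps$. The shift $\sigma_t\colon z\mapsto t^\smallfrown z$ is a Borel bijection of $2^\omega$ onto $N_t$, and it is an $E_I$-isomorphism: for $z,z'$ one has $t^\smallfrown z\,E_I\,t^\smallfrown z'\iff\sum_k|z(k)-z'(k)|/(k+m+1)<\infty$, which by the limit comparison test (the weights satisfy $(k+1)/(k+m+1)\to 1$) holds iff $\sum_k|z(k)-z'(k)|/(k+1)<\infty$, i.e.\ iff $z\,E_I\,z'$. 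Hence $E_I\cong_B E_I\rst N_t$, and pulling $A\cap N_t$ back through $\sigma_t$ it suffices to reduce $E_I$ into a Borel $A\subseteq 2^\omega$ with $\mu(A)>1-\eps$.

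\textbf{Step 3 (coding on irrelevant coordinates).} Fix an infinite $S\subseteq\omega$ of density $0$ with $\sum_{n\in S}1/(n+1)<\infty$ (e.g.\ $S=\{2^k:k\ge 1\}$), and let $T=\omega\setminus S=\{t_0<t_1<\cdots\}$; density $0$ gives $t_k/k\to 1$. Because $\sum_{n\in S}1/(n+1)<\infty$, altering a point on its $S$-coordinates never changes its $E_I$-class. I would define $f(x)$ by setting $f(x)(t_k)=x(k)$ for all $k$ (carrying $x$ faithfully onto $T$) and leaving the $S$-coordinates free. Splitting the defining sum for two outputs over $S$ and $T$, the $S$-part is bounded by $\sum_{n\in S}1/(n+1)<\infty$ and hence irrelevant, while the $T$-part is $\sum_k|x(k)-y(k)|/(t_k+1)$, which by limit comparison ($t_k/k\to 1$) converges iff $x\,E_I\,y$. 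Thus $f(x)\,E_I\,f(y)\iff x\,E_I\,y$ \emph{however} the free $S$-coordinates are chosen; the only remaining requirements are that $f$ be Borel and that $f(x)\in A$ for every $x$.

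\textbf{Step 4 (landing in $A$ — the main obstacle).} For fixed $x$ the admissible outputs form the fibre $F_x=\{y:y\rst T=f(x)\rst T\}\cong 2^S$, which is $\mu$-null, so positivity of $\mu(A)$ cannot by itself force $F_x\cap A\neq\emptyset$. By Fubini, $\int\mu_S(A\cap F_c)\,d\mu_T(c)=\mu(A)>1-\eps$, so for $\mu_T$-almost every $c$ the fibre meets $A$ in positive conditional measure; selecting within such a fibre (say its conditional density point) is Borel, which already yields a $\mu$-measurable reduction defined on a conull $E_I$-invariant Borel set. The hard part is upgrading this to a \emph{total} Borel reduction: the exceptional set of $x$ whose entire $E_I$-class misses $A$ is $\mu$-null and $E_I$-invariant but may be nonempty (for instance $A=2^\omega\setminus(x_0\oplus I)$ omits the class of $x_0$ outright), and on it one cannot stay inside the class. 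Since $E_I$-classes are uncountable, Lusin--Novikov uniformization is unavailable; I would instead exploit that the summable ideal $I$ is Polishable, so that $E_I$ is the orbit equivalence relation of the continuous, $\mu$-preserving translation action of the Polish group $(I,\|\cdot\|_I)$, and apply Jankov--von Neumann selection to the Borel set $\{(x,w):x\oplus w\in A\}$. Absorbing the null invariant set of omitted classes into the abundant spare classes of $A$ by a Hilbert-hotel re-indexing on the quotient, while keeping everything Borel, is the step I expect to require the most care.
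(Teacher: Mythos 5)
Your Steps 1--3 are fine as far as they go, but they merely postpone the whole difficulty to Step 4, and Step 4 --- which you yourself identify as the crux --- does not go through with the tools you invoke, so the proposal has a genuine gap. Concretely: (i) Jankov--von Neumann uniformization applied to $\{(x,w)\in 2^\omega\times I: x\oplus w\in A\}$ yields only a universally (Baire-, Lebesgue-) measurable selection, never a Borel one, so even where it applies it cannot deliver the total \emph{Borel} reduction the lemma asserts. (ii) It does not even apply everywhere: as your own example $A=2^\omega\setminus(x_0\oplus I)$ shows, a Borel set of measure arbitrarily close to $1$ can omit entire $E_I$-classes, and for such $x$ the section $\{w\in I:x\oplus w\in A\}$ is empty, so no selection theorem can help. (iii) The proposed repair --- absorbing the omitted classes by a ``Hilbert-hotel re-indexing on the quotient'' --- would require choosing and enumerating $E_I$-classes in a Borel fashion on an invariant set, i.e.\ essentially a Borel selector or transversal; since $E_I$ is not smooth (it is turbulent, which is exactly the property Proposition \ref{PR1} exploits), no such Borel re-indexing exists, and in any case reducing $E_I$ restricted to the omitted invariant set into the ``spare'' classes of $A$ is precisely the lemma one is trying to prove, so the argument is circular at the decisive point.

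The paper's proof avoids fibre selection altogether and manufactures the image points inside $A$ block by block: passing to a closed $A$ of positive measure, it builds a fusion scheme $s_t$ ($t\in 2^{<\omega}$) of finite strings with $N_{s_t}\subseteq A$ and conditional measures $\mu(A_{(s_t)})>1-2^{-n}$ for $t\in 2^n$, where at stage $n+1$ the two extending blocks $s_{n+1,0},s_{n+1,1}$ are found using the Lebesgue density theorem together with the Steinhaus-type fact that $A''-A''$ contains a neighbourhood of $0^\infty$, so that their weighted coordinate difference lies between $\frac{1}{n+1}$ and $\frac{2}{n+1}$. The branch map $x\mapsto a_x=\bigcup_n s_{x\upharpoonright n}$ is then a continuous, everywhere-defined reduction, since the weighted distance between $a_x$ and $a_y$ is sandwiched between $\sum_n\frac{|x(n)-y(n)|}{n+1}$ and twice that sum. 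The moral, and what your scheme misses, is that membership in $A$ must be built into the construction of the reduction from the start (every finite approximation already lies in $A$ with high relative measure), rather than first fixing a coding of $x$ on a prescribed set of coordinates and afterwards trying to steer the free coordinates into $A$: once the $T$-coordinates are frozen, the admissible fibre is null and confined to a single $E_I$-class, and no measurable-selection argument can recover Borelness or totality.
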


{\it Notation:} The constant $0$ sequence of length $n\in\omega\cup\{\infty\}$ is denoted $0^n$. If $A\subseteq 2^\omega$ and $s\in 2^{<\omega}$ let
$$
A_{(s)}=\{x\in 2^\omega: s^\frown x\in A\},
$$
the \emph{localization} of $A$ at $s$.

\begin{proof}[Proof of Lemma \ref{l.sum}]
Without loss of generality assume that $A\subseteq 2^\omega$ is closed. We will define $q_n\in\omega$, $s_{n,i}, s_t\in 2^{<\omega}$ recursively for all $n\in\omega$, $i\in\{0,1\}$ and $t\in 2^{<\omega}$ satisfying
\begin{enumerate}
\item $q_0=0$ and $q_{n+1}=q_n+\lh(s_{n,0})$.
\item $s_{0,i}=\emptyset$ and $\lh(s_{n,i})=\lh(s_{n,1-i})>0$ when $n>0$.
\item $s_\emptyset=\emptyset$ and $s_{t^\frown i}={s_t}^\frown {s_{\lh(t)+1,i}}$ for all $t\in 2^{<\omega}$, $i\in\{0,1\}$.
\item $\frac 1 {n+1}\leq \sum_{k=0}^{\lh(s_{n+1,0})} \frac {|s_{n+1,0}(k)-s_{n+1,1}(k)|}{q_n+k+1}\leq \frac 2 {n+1}$.
\item $N_{s_t}\subseteq A$.
\item If $t\in 2^n$ then $\mu(A_{(s_t)})>1-2^{-n}$.
\end{enumerate}
Suppose this can be done. We claim that the map $2^\omega\to A: x\mapsto a_x$ defined by
$$a_x=\bigcup_{n\in\omega} s_{x\upharpoonright n}$$
is a Borel (in fact, continuous) reduction of $E_I$ to $E_I\upharpoonright A$. To see this, fix $x,y\in 2^\omega$ and note that by (4) we have that
$$
\sum_{n=0}^\infty \frac {|x(n)-y(n)|} {n+1}\leq\sum_{n=0}^\infty \sum_{k=0}^{\lh(s_{n+1,0})} \frac {|s_{n+1,x(i)}(k)-s_{n+1,y(i)}(k)|}{q_n+k+1}=\sum_{n=0}^\infty\frac {|a_x(n)-a_y(n)|}{n+1}\leq 2 \sum_{n=0}^\infty \frac {|x(n)-y(n)|} {n+1}
$$
so that $x E_I y$ if and only if $a_x E_I a_y$.

We now show that we can construct a scheme satisfying (1)--(6) above. Suppose $q_k$, $s_{k,i}$ and $s_t$ have been defined for all $k\leq n$ and $t\in 2^{\leq n}$. It is enough to define $s_{n+1,i}$ satisfying (4)--(6). Define
$$
f_{q_n}:2^\omega\to [0,\infty]: f_{q_n}(x)=\sum_{k=0}^\infty\frac {x(k)} {q_n+k+1}.
$$
It is clear that $f_{q_n}(N_{0^k})$ is dense in $[0,\infty]$ for all $k\in\omega$. Let
$$
A'=\{x\in A: \lim_{k\to\infty} \mu(A_{(x\upharpoonright k)})\to 1\},
$$
i.e, the set of points in $A$ of density 1. By the Lebesgue density theorem \cite[17.9]{KECHRIS} we have $\mu(A\setminus A')=0$. Let $A''=\bigcap_{t\in 2^n} A'_{(s_t)}$ and note that by (6) we have $\mu(A'')>0$. Thus the set of differences $A''-A''$ contains a neighborhood of $0^\infty$ by \cite[17.13]{KECHRIS}. It follows that there are $x_0,x_1\in A''$ such that
$$
\frac 1 {n+2}\leq \sum_{k=0}^\infty\frac {|x_0(k)-x_1(k)|}{q_n+k+1}\leq \frac 2 {n+2}.
$$
Since all points in $A'_{(s_t)}$ have density $1$ in $A'_{(s_t)}$ there is some $k_0\in\omega$ such that
$$
\mu(A'_{(s_t^\frown x_i\upharpoonright k_0)})>1-2^{-n-1}
$$
for all $t\in 2^n$. Defining $s_{n+1,i}=x_i\upharpoonright k_0$, it is then clear that (4)--(6) holds.
\end{proof}

\begin{proof}[Proof of Proposition \ref{PR1}]
As the proof easily relativizes, assume that $a=0$. We proceed exactly as in \cite[Proposition 4.2]{VFAT}. Suppose $A\subseteq P(2^\omega)$ is a $\Sigma^1_2$ m.o. family. Recall from \cite{kechsof01} and \cite[p. 1406]{VFAT} that there is a Borel function $2^\omega\to P(2^\omega):x\mapsto \mu^x$ such that
$$
x E_I y\Longrightarrow \mu^x\approx\mu^y
$$
and
$$
x\not\!\!{E}_I y\Longrightarrow \mu^x\perp\mu^y.
$$
Define as in \cite[Proposition 4.2]{VFAT} a relation $Q\subseteq 2^\omega\times P(2^\omega)^\omega$ by
$$
Q(x,(\nu_n))\iff (\forall n)(\nu_n\in A\wedge \nu_n\not\perp\mu^x)\wedge (\forall\mu)(\mu\not\perp\mu^x\longrightarrow (\exists n)\nu_n\not\perp\mu)
$$
and note that this is $\Sigma^1_2$ when $A$ is. Note that $Q(x,(\nu_n))$ precisely when $(\nu_n)$ enumerates the measures in $A$ not orthogonal to $\mu^x$ (this set is always countable, see \cite[Theorem 3.1]{kechsof01}.) Since $A$ is maximal, each section $Q_x$ is non-empty, and so we can uniformize $Q$ with a (total) function $f:2^\omega\to p(2^\omega)^\omega$ having a $\Delta^1_2$ graph. Note that assignment
$$
x\mapsto A(x)=\{f(x)_n:n\in\N\}
$$
is invariant on the $E_I$ classes.

If there is a Cohen real over $L$ it follows from \cite{jush89} that $f$ is Baire measurable. Since $E_I$ is a turbulent equivalence relation (in the sense of Hjorth, see e.g. \cite{kechsof01}) the map $x\mapsto A(x)$ must be constant on a comeagre set. But this contradicts that all $E_I$ classes are meagre.

If on the other hand there is a random real over $L$, then $f$ is Lebesgue measurable by \cite{jush89}. Let $F\subseteq 2^\omega$ be a closed set with positive measure on which $f$ is continuous, and let $g: 2^\omega\to F$ be a Borel reduction of $E_I$ to $E_I\upharpoonright F$. Note that $x\mapsto A(g(x))$ is then an $E_I$-invariant Borel assignment of countable subsets of $p(2^\omega)$, and so since $E_I$ is turbulent the function $f\circ g$ must be constant on a comeagre set. This again contradicts that all $E_I$ classes are meagre.
\end{proof}

\section{$\Delta^1_3$ w.o. of the reals, $\Pi^1_2$ m.o. family, no $\mathbf{\Sigma}^1_2$ m.o. families with $\mathfrak{b}=\mathfrak{c}=\omega_3$}

We proceed with the proof of Theorem~\ref{TH1}. We will use a modification of the model constructed in~\cite{VFSFLZ}. The preliminary stage $\PP_0=\PP^0*\PP^1*\PP^2$ of the iteration will coincide almost identically with the preliminary stage $\PP_0$ of~\cite{VFSFLZ} (see Step 0 through Step 2). For convenience of the reader we outline its construction. We work over the constructible universe $L$.

Recall that a transitive $ZF^-$ model is {\emph{suitable}} if $\omega_3^\M$ exists and $\omega_3^\M=\omega_3^{L^\M}$. If $\M$ is suitable then
also $\omega_1^\M=\omega_1^{L^\M}$ and $\omega_2^\M=\omega_2^{L^\M}$.

Fix a $\Diamond_{\omega_2}(\cof(\omega_1))$ sequence $\la G_\xi:\xi\in\omega_2\cap\cof(\omega_1)\ra$ which is $\Sigma_1$-definable over $L_{\omega_2}$. For $\alpha<\omega_3$, let $W_\alpha$ be the $L$-least subset of $\omega_2$ coding $\alpha$ and let $S_\alpha=\{\xi\in\omega_2\cap\cof(\omega_1):G_\xi=W_\alpha\cap\xi\neq\emptyset\}$. Then $\vec{S}=\la S_\alpha:1<\alpha<\omega_3\ra$
is a sequence of stationary subsets of $\omega_2\cap\cof(\omega_1)$, which are mutually almost disjoint.

For every $\alpha$ such that $\omega\leq\alpha<\omega_3$ shoot a club $C_\alpha$ disjoint from $S_\alpha$ via the poset $\PP^0_\alpha$, consisting of all closed subsets of $\omega_2$ which are disjoint from $S_\alpha$ with the extension relation being end-extension, and let $\PP^0=\prod_{\alpha<\omega_3}\PP^0_\alpha$ be the direct product of the $\PP^0_\alpha$'s with supports of size $\omega_1$, where for $\alpha\in\omega$, $\PP^0_\alpha$ is the trivial poset. Then $\PP^0$ is countably closed, $\omega_2$-distributive and $\omega_3$-c.c.

For every $\alpha$ such that $\omega\leq\alpha<\omega_3$ let $D_\alpha\subseteq\omega_3$ be a set coding the triple $\la C_\alpha, W_\alpha, W_\gamma\ra$ where $\gamma$ is the largest limit ordinal $\leq\alpha$. Let
$$E_\alpha=\{\M\cap\omega_2:\M\prec L_{\alpha+\omega_2+1}[D_\alpha],\omega_1\cup\{D_\alpha\}\subseteq\M\}.$$
Then $E_\alpha$ is a club on $\omega_2$. Choose $Z_\alpha\subseteq\omega_2$ such that $\mathit{Even}(Z_\alpha)=D_\alpha$, where $\mathit{Even}(Z_\alpha)=\{\beta :2\cdot\beta\in Z_\alpha\}$, and if $\beta<\omega_2$ is the $\omega_2^\M$ for some suitable model $\M$ such that $Z_\alpha\cap\beta\in\M$, then $\beta\in E_\alpha$. Then we have:

\smallskip
\begin{itemize}
\item[\ ] 
\begin{itemize}
\item[$(*)_\alpha$:]
   If  $\beta < \omega_2$, $\M$ is a suitable model  such that
   $\w_1\subset \M$, $\omega_2^\M = \beta$, and $Z_\alpha \cap \beta\in \M$, then
   $\M\vDash \psi(\w_2, Z_\alpha \cap \beta)$, where $\psi(\w_2,X)$ is the
   formula  ``$\mathit{Even}(X)$ codes a triple
   $\la\bar{C},\bar{W},\bar{\bar{W}}\ra$, where $\bar{W}$ and $\bar{\bar{W}}$ are
    the $L$-least codes of ordinals
   $\bar{\alpha},\bar{\bar{\alpha}} < \omega_3$ such that $\bar{\bar{\alpha}}$
   is the largest limit ordinal not exceeding $\bar{\alpha}$,
   and $\bar{C}$ is a club in $\omega_2$ disjoint
   from $S_{\bar{\alpha}}$''.
\end{itemize}
\end{itemize}

Similarly to  $\vec{S}$ define a sequence
$\vec{A}= \la A_\xi:\xi<\omega_2\ra$  of stationary subsets of
$\omega_1$ using the ``standard'' $\Diamond$-sequence. Code $Z_\alpha$ by a subset $X_\alpha$ of $\omega_1$
with the poset $\PP^1_\alpha$ consisting of all pairs
$\la s_0,s_1\ra\in[\omega_1]^{<\omega_1}\times [Z_\alpha]^{<\omega_1}$
where $\la t_0,t_1\ra\leq \la s_0,s_1\ra$ iff $s_0$ is an initial segment of
$t_0$, $s_1\subseteq t_1$ and $t_0\backslash s_0\cap
A_\xi=\emptyset$ for all $\xi\in s_1$. Then $X_\alpha$ satisfies the
following condition:
\smallskip

\begin{itemize}
\item[\ ]
\begin{itemize}
\item[$(**)_\alpha$:]

If $\w_1<\beta \leq \omega_2$ and $\M$ is a suitable model  such that
$\omega_2^\M = \beta$ and $\{X_\alpha\}\cup\w_1\subset \M$, then  $\M\vDash \phi(\w_1,\w_2, X_\alpha)$,
where $\phi(\w_1,\w_2,X)$ is the formula: `` Using the sequence $\vec{A}$,
$X$ almost disjointly codes a  subset $\bar{Z}$ of $\omega_2$, such that
$\mathit{Even}(\bar{Z})$ codes a triple $\la\bar{C},\bar{W},\bar{\bar{W}}\ra$, where
$\bar{W}$ and $\bar{\bar{W}}$ are the $L$-least codes of  ordinals
$\bar{\alpha},\bar{\bar{\alpha}} < \omega_3$ such that $\bar{\bar{\alpha}}$
is the largest limit ordinal not exceeding $\bar{\alpha}$,
and $\bar{C}$ is a club in $\omega_2$ disjoint from $S_{\bar{\alpha}}$''.
\end{itemize}
\end{itemize}

Let $\PP^1=\prod_{\alpha<\omega_3}\PP^1_\alpha$, where $\PP^1_\alpha$ is the trivial poset
for all $\alpha\in\omega$, with countable support. Then $\PP^1$ is countably closed and has the $\omega_2$-c.c.

Finally we force a localization of the $X_\alpha$'s. Fix $\phi$
as in $(**)_\alpha$ and let $\mathcal{L}(X,X')$ be the poset defined in~\cite[Definition 1]{VFSFLZ},
where $X, X'\subset\w_1$ are such that $\phi(\w_1,\w_2,X)$
and $\phi(\w_1,\w_2,X')$ hold in any suitable model $\M$ with $\w_1^\M=\w_1^L$
containing $X$ and $X'$, respectively. That is $\mathcal{L}(X,X')$ consists
of all functions $r:|r|\to 2$, where the domain $|r|$ of
$r$ is a countable limit ordinal such that:
\begin{enumerate}
{\item if $\gamma<|r|$ then $\gamma\in X$ iff $r(3\gamma)=1$}
{\item if $\gamma<|r|$ then $\gamma\in X'$ iff $r(3\gamma+1)=1$}
{\item if $\gamma\leq|r|$, $\M$ is a countable suitable model
containing $r\rest\gamma$ as an element and $\gamma=\omega_1^\M$,
then $\M\vDash\phi(\w_1,\w_2, X\cap\gamma)\wedge \phi(\w_1,\w_2, X'\cap\gamma)$.}
\end{enumerate}The extension relation is end-extension.
Then let $\PP^2_{\alpha+m}=\mathcal L(X_{\alpha+m}, X_\alpha)$ for every $\alpha\in\mathit{Lim}(\w_3)\backslash\{0\}$
and $m\in\omega$. Let $\PP^2_{\alpha+m}$ be the trivial poset for $\alpha=0$, $m\in\omega$ and let $$\PP^2=\prod_{\alpha\in\mathit{Lim}(\w_3)}\prod_{m\in\w}\PP^2_{\alpha+m}$$
with countable supports. Note that the  poset $\PP^2_{\alpha+m}$, where $\alpha>0$, produces a generic function
in  $^{\w_1}2$ (of $L^{\PP^0\ast\PP^1}$), which is the
characteristic function of a subset $Y_{\alpha+m}$ of $\w_1$ with the following property:
\smallskip

\begin{itemize}
\item[\ ] 
\begin{itemize}
\item[$(*\!*\!*)_\alpha$:]
For every $\beta < \omega_1$ and any suitable $\M$ such that $\omega_1^\M =
\beta$ and $Y_{\alpha+m} \cap \beta$ belongs to $\M$, we have
$\M\vDash\phi(\w_1,\w_2,X_{\alpha+m}\cap\beta)\wedge \phi(\w_1,\w_2,X_\alpha\cap\beta)$.
\end{itemize}
\end{itemize}

\begin{clm} $\PP_0:=\PP^0*\PP^1*\PP^2$ is $\omega$-distributive.
\end{clm}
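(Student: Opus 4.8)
The plan is to factor $\PP_0=(\PP^0*\PP^1)*\PP^2$ and to use that being \emph{$\omega$-distributive}, i.e.\ adding no new $\omega$-sequence of ordinals (equivalently, the intersection of countably many dense open sets is dense), is preserved under two-step iteration. First I would note that $\PP^0$ and $\PP^1$ are both countably closed, as recorded above, so the iteration $\PP^0*\PP^1$ is countably closed; in particular $V_1:=L^{\PP^0\ast\PP^1}$ has no new $\omega$-sequences of ordinals and $\omega_1^{V_1}=\omega_1^L$. Since an $\omega$-sequence of ordinals appearing in a $\PP_0$-extension would be added either over $V_1$ by $\PP^2$ or already over $L$ by $\PP^0*\PP^1$, it suffices to prove that $\PP^2$ is $\omega$-distributive over $V_1$.

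For the distributivity of $\PP^2$ I would run a single fusion driven by one countable elementary submodel. Fix a condition $p$ and a sequence $\langle D_n:n\in\omega\rangle$ of dense open subsets of $\PP^2$, and choose a countable $\M\prec(H_\theta^{V_1},\in,<_{V_1})$ with $\theta$ large and with $p$, $\langle D_n\rangle$, the $\Diamond$-parameters $\vec S,\vec A$, and all the relevant $X_{\alpha+m},X_\alpha$ belonging to $\M$. Put $\gamma^*=\M\cap\omega_1$. Working inside $\M$, I would build a descending sequence $p=p_0\geq p_1\geq\cdots$ with $p_n\in D_n\cap\M$, interleaving a bookkeeping that drives the domain of every coordinate which ever enters the (countable) support cofinally up to $\gamma^*$; this is possible because every support lies in $\M$ and the $D_n$ are dense. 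Let $q$ be the coordinatewise union, so each $q(\xi)=\bigcup_n p_n(\xi)$ is a function with domain $\gamma^*$ and the support of $q$ is a countable subset of $\M$.

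The crux is to verify that $q$ is a condition, i.e.\ that each $q(\xi)$ satisfies conditions~(1)--(3) of the corresponding $\mathcal{L}(X,X')$. Conditions~(1) and~(2) are local and pass to unions, and condition~(3) holds below $\gamma^*$ because each such instance is already witnessed in some $p_n$. The one genuinely new requirement is condition~(3) at the top level $\gamma=\gamma^*=|q(\xi)|$: every countable suitable $\N$ with $\omega_1^\N=\gamma^*$ and $q(\xi)\in\N$ (hence $X\cap\gamma^*,X'\cap\gamma^*\in\N$) must satisfy $\phi(\omega_1,\omega_2,X\cap\gamma^*)\wedge\phi(\omega_1,\omega_2,X'\cap\gamma^*)$. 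Here the coding enters: because $\gamma^*=\M\cap\omega_1$ for an elementary $\M$ containing the $X$'s and the $\Diamond$-parameters, a condensation argument reflects property $(**)_\alpha$ from the $\omega_2$-level, so that $\phi(\omega_1,\omega_2,X\cap\gamma^*)$ holds in the transitive collapse of $\M$; and since $\phi$ is essentially $\Sigma_1$ with all its witnesses (the club $\bar C$ and the $L$-least codes $\bar W,\bar{\bar W}$) manufactured inside $L$, suitability of $\N$ (which pins $\omega_2^\N,\omega_3^\N$ to their $L^\N$-values) makes $\phi(\omega_1,\omega_2,X\cap\gamma^*)$ absolute between $V_1$ and every such $\N$. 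Hence condition~(3) holds at $\gamma^*$ for all coordinates simultaneously, $q$ is a condition, $q\leq p$, and $q\in\bigcap_n D_n$ since the $D_n$ are open.

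I expect the reflection in the previous paragraph to be the main obstacle: one must check that the single ordinal $\gamma^*=\M\cap\omega_1$ is ``good'' for all the countably many active coordinates at once, that is, that $\phi(\omega_1,\omega_2,X\cap\gamma^*)$ reflects correctly to \emph{every} countable suitable model with $\omega_1=\gamma^*$, not merely to the collapse of $\M$. This is precisely the absoluteness built into the construction through $(*)_\alpha$--$(*\!*\!*)_\alpha$ and the $\Sigma_1$-definability over $L_{\omega_2}$ of the $\Diamond$-sequence used to manufacture the $X_\alpha$. Using one elementary submodel to handle every coordinate is what makes the product step go through; the remaining bookkeeping for the countable-support product is routine.
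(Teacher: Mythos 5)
Your architecture is the right one, and it is essentially the argument behind the proof the paper invokes (the paper itself only cites~\cite[Lemma 1]{VFSFLZ} for this claim): factor off the countably closed $\PP^0\ast\PP^1$, then fuse inside a single countable $\M\prec H_\theta$ with $\gamma^{*}=\M\cap\omega_1$ and verify clause~(3) at $\gamma^{*}$ by reflection. But your justification of the crux is wrong as stated. You claim that $\phi$ transfers to every suitable $\N$ because ``all its witnesses (the club $\bar C$ and the $L$-least codes $\bar W,\bar{\bar W}$) are manufactured inside $L$''. This is false for the club: the $\bar C$ decoded from $X_\alpha$ is $C_\alpha$, which is generic for $\PP^0$ and cannot lie in $L$, since $S_\alpha$ is stationary there. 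Moreover, the direction of absoluteness you appeal to goes the wrong way: $\Sigma_1$ statements are upward absolute, from a transitive $\N$ into $V_1$, whereas you must push $\phi$ \emph{down} into an arbitrary countable suitable $\N$ with $\omega_1^\N=\gamma^{*}$ and $q(\xi)\rst\gamma^{*}\in\N$ --- and such an $\N$ need not be comparable to the collapse $\bar\M$ of your submodel, so condensation of $\M$ alone only handles the single model $\bar\M$. The actual mechanism is that the witness is \emph{recoverable inside $\N$} by decoding $X\cap\gamma^{*}$, using that $\vec A$, $\vec S$ and the $L$-least codes are $\Sigma_1$-definable over the appropriate $L$-levels and hence computed compatibly by suitable models sharing $\omega_1$, together with the clause built into the choice of $Z_\alpha$ that every $\beta$ which is the $\omega_2$ of a suitable model containing $Z_\alpha\cap\beta$ lies in the club $E_\alpha$ (property $(*)_\alpha$); it is this clause, not $\Sigma_1$-upward-absoluteness, that makes the decoded initial segment correct at the ordinal $\omega_2^\N$ for \emph{every} such $\N$. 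Your last paragraph correctly names this as the main obstacle, but the argument you offer for it does not close it.

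There is a second, unacknowledged gap: you treat as ``routine bookkeeping'' the step where the domain of every active coordinate is driven cofinally up to $\gamma^{*}$, but in $\mathcal{L}(X,X')$ the density of conditions with long domains is itself a lemma. Any end-extension of $p_n(\xi)$ passes through new countable limit ordinals $\gamma$, and clause~(3) must hold at each of them --- a requirement you cannot simply arrange at an arbitrary intermediate $\gamma$, since such $\gamma$ were not chosen by reflection. The standard repair is to use the free bits (the values $r(3\gamma+2)$, which are unconstrained by clauses~(1) and~(2)) to code, cofinally below each intermediate limit, a collapse of that limit to $\omega$; then no suitable $\N$ containing $r\rst\gamma$ can have $\omega_1^\N=\gamma$, so clause~(3) holds vacuously there. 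This is in effect the countably closed dense subset of the localization poset noted in~\cite[Remark 2]{VFSF}, and your fusion needs it before the coordinatewise union $q$ can even reach domain $\gamma^{*}$; without it, a coordinate whose domain grows through a limit $\delta_\xi<\gamma^{*}$ leaves clause~(3) at $\delta_\xi$ unverified.
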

\begin{proof}~\cite[Lemma 1]{VFSFLZ}.
\end{proof}

Let $\vec{B}=\la B_{\zeta,m}:\zeta<\omega_1,m\in\omega\ra$ be a nicely definable sequence of almost disjoint subsets of $\omega$. We will define a finite support
iteration $\la \PP_\alpha,\dot{\QQ}_\beta:\alpha\leq\omega_3,\beta<\omega_3\ra$ such that $\PP_0=\PP^0*\PP^1*\PP^2$, for every $\alpha<\omega_3$, $\dot{\QQ}_\alpha$ is a $\PP_\alpha$-name for a $\sigma$-centered poset, in $L^{\PP_{\omega_3}}$ there is a $\Delta^1_3$-definable wellorder of the
reals, a $\Pi^1_2$-definable maximal family of orthogonal measures and there are no $\Sigma^1_2$-definable maximal families of orthogonal measures. Along the iteration for every $\alpha<\omega_3$, we will define in $V^{\PP_\alpha}$ a set $O_\alpha$ of orthogonal measures and for $\alpha\in\mathit{Lim}(\alpha)$ a subset $A_\alpha$ of $[\alpha,\alpha+\omega)$. Every $\QQ_\alpha$ will add a generic real, whose $\PP_\alpha$-name will be denoted $\dot{u}_\alpha$ and similarly to the proof of~\cite[Lemma 2]{VFSFLZ} one can prove that $L[G_\alpha]\cap{^\omega\omega}=L[\la\dot{u}^{G_\alpha}_\xi:\xi<\alpha\ra]\cap{^\omega\omega}$ for every $\PP_\alpha$-generic filter $G_\alpha$.  This gives a canonical wellorder of the reals in $L[G_\alpha]$ which depends only on the sequence $\la \dot{u}_\xi:\xi<\alpha\ra$, whose $\PP_\alpha$-name will be denoted by $\dot{<}_\alpha$. We can additionally arrange that for $\alpha<\beta$, $<_\alpha$ is an initial segment of
$<_\beta$, where $<_\alpha=\dot{<}^{G_\alpha}_\alpha$ and $<_\beta=\dot{<}_\beta^{G_\beta}$. Then if $G$ is a $\PP_{\omega_3}$-generic filter over $L$, then $<^G=\bigcup\{\dot{<}^G_\alpha:\alpha<\omega_3\}$ will be the desired wellorder of the reals and
$O=\bigcup_{\alpha<\omega_3} O_\alpha$ will be the $\Pi^1_2$-definable maximal family of orthogonal measures.

We proceed with the recursive definition of $\PP_{\omega_3}$. For every $\nu\in[\omega_2,\omega_3)$ let
$i_\nu:\nu\cup\{\la\xi,\eta\ra:\xi<\eta<\nu\}\to\mathit{Lim}(\omega_3)$ be a fixed bijection. If $G_\alpha$ is a
$\PP_\alpha$-generic filter over $L$, $<_\alpha=\dot{<}_\alpha^{G_\alpha}$ and $x,y$  are reals in $L[ G_\alpha]$
such that $x<_\alpha y$, let $x*y=\{2n:n\in x\}\cup\{2n+1:n\in y\}$ and $\Delta(x*y)=\{2n+2:n\in x*y\}\cup\{2n+1:n\notin x*y\}$.
Suppose $\PP_\alpha$ has been defined and fix a $\PP_\alpha$-generic filter $G_\alpha$.

If $\alpha=\omega_2\cdot\alpha^\prime+\xi$, where $\alpha^\prime>0$, $\xi\in\mathit{Lim}(\omega_2)$, let $\nu=o.t.(\dot{<}^{G_\alpha}_{\omega_2\cdot\alpha^\prime})$ and let $i=i_\nu$.

{\emph{Case 1}}. If $i^{-1}(\xi)=\la\xi_0,\xi_1\ra$ for some $\xi_0<\xi_1<\nu$, let $x_{\xi_0}$ and $x_{\xi_1}$ be the $\xi_0$-th and
$\xi_1$-th reals in $L[G_{\omega_2\cdot\alpha^\prime}]$ according to the wellorder $\dot{<}^{G_\alpha}_{\omega_2\cdot\alpha^\prime}$. In $L^{\PP_\alpha}$
let $$\QQ_\alpha=\{\la s_0,s_1\ra:s_0\in[\omega]^{<\omega}, s_1\in [\bigcup_{m\in\Delta(x_{\xi_0}*x_{\xi_1})} Y_{\alpha+m}\times{\{m\}}]^{<\omega}\},$$
where $\la t_0, t_1\ra\leq \la s_0,s_1\ra$ if and only if $s_1\subseteq t_1$, $s_0$ is an initial segment of $t_0$ and $(t_0\backslash s_0)\cap B_{\zeta,m}=\emptyset$ for all $\la \zeta, m\ra\in s_1$. Let $u_\alpha$ be the generic real added by $\QQ_\alpha$, $A_\alpha=\alpha+\omega\backslash
\Delta(x_{\xi_0}*x_{\xi_1})$ and $O_\alpha=\emptyset$.

{\emph{Case 2}}. Suppose $i^{-1}(\xi)=\zeta\in\nu$. If the $\zeta$-th real according to the wellorder $\dot{<}^{G_\alpha}_{\omega_2\cdot\alpha^\prime}$
is not the code of a measure orthogonal to $O^\prime_\alpha=\bigcup_{\gamma<\alpha}O_\gamma$, let $\QQ_\alpha$ be the trivial poset, $A_\alpha=\emptyset$, $O_\alpha=\emptyset$. Otherwise, i.e.
in case $x_\zeta$ is a code for a measure orthogonal to $O^\prime_\alpha$, let
$$\QQ_\alpha=\{\la s_0,s_1\ra:s_0\in[\omega]^{<\omega}, s_1\in [\bigcup_{m\in\Delta(x_\zeta)} Y_{\alpha+m}\times{\{m\}}]^{<\omega}\},$$
where $\la t_0, t_1\ra\leq \la s_0,s_1\ra$ if and only if $s_1\subseteq t_1$, $s_0$ is an initial segment of $t_0$ and $(t_0\backslash s_0)\cap B_{\zeta,m}=\emptyset$ for all $\la \zeta, m\ra\in s_1$. Let $u_\alpha$ be the generic real added by $\QQ_\alpha$. In $L^{\PP_{\alpha+1}}= L^{\PP_\alpha*\QQ_\alpha}$ let $g_\alpha=G(x_\zeta,u_\alpha)$ be the code of a measure equivalent to $\mu_{x_\zeta}$ which codes
$u_\alpha$ (see~\cite[Lemma 3.5]{VFAT}) and let $O_\alpha=\{\mu_{g_\alpha}\}$. Let $A_\alpha=\alpha+\omega\backslash\Delta(u_\alpha)$.

If $\alpha$ is not of the above form, i.e. $\alpha$ is a successor or $\alpha\in\omega_2$, let $\QQ_\alpha$ be the following poset for adding a dominating real:
$$\QQ_\alpha=\{\la s_0,s_1\ra:s_0\in \omega^{<\omega}, s_1\in [\hbox{o.t.}(\dot{<}_\alpha^{G_\alpha})]^{<\omega}\},$$
where $\la t_0, t_1\ra\leq \la s_0,s_1\ra$ if and only if $s_0$ is an initial segment of  $t_0$, $s_1\subseteq t_1$, and $t_0(n)>x_\xi(n)$ for all
$n\in\hbox{dom}(t_0)\backslash\hbox{dom}(s_0)$ and $\xi\in s_1$, where $x_\xi$ is the $\xi$-th real in $L[G_\alpha]\cap \omega^\omega$ according to the wellorder $\dot{<}^{G_\alpha}_\alpha$. Let $A_\alpha=\emptyset$, $O_\alpha=\emptyset$.

With this the definition of $\PP_{\omega_3}$ is complete. Let $O=\bigcup_{\alpha<\omega_3} O_\alpha$. In $L^{\PP_{\omega_3}}$ we have: {\emph{$\nu$ is a measure in the set $O$ if and only if
for every countable suitable model $\M$ such that $\nu\in\M$, there is $\bar{\alpha}<\omega_3^\M$ such that $S_{\bar{\alpha}+m}$ is nonstationary in
$(L[r(\nu)])^\M$ for every $m\in \Delta(r(\nu))$.}}
Therefore $O$ has indeed a $\Pi^1_2$ definition. Furthermore $O$ is maximal in $P_c(2^\omega)$. Indeed, suppose in $L^{\PP_{\omega_3}}$ there is a code $x$ for a measure orthogonal to every measure in the family $O$. Choose $\alpha$ minimal such that $\alpha=\omega_2\cdot\alpha^\prime+\xi$ for some $\alpha^\prime>0$ and $\xi\in\mathit{Lim}(\omega_2)$ and $x\in L[G_{\omega_2\cdot\alpha^\prime}]$. Let $\nu=o.t.(\dot{<}^{G_\alpha}_{\omega_2\cdot\alpha^\prime})$ and let
$i=i_\nu$. Then $x=x_\zeta$ is the $\zeta$-th real according to the wellorder $\dot{<}^{G_\alpha}_{\omega_2\cdot\alpha^\prime}$, where $\zeta\in\nu$ and so for some $\xi\in\mathit{Lim}(\omega_2)$, $i^{-1}(\xi)=\zeta$. But then $x_\zeta=x$ is the code of a measure orthogonal to $O_\alpha$ and so by construction $O_{\alpha+1}$ contains a measure equivalent to $\mu_x$, which is a contradiction. To obtain a $\Pi^1_2$-definable m.o. family in $L{\PP_{\omega_3}}$ consider the union of $O$ with the set of all point measures. Just as in~\cite{VFSFLZ} one can show that $<$ is indeed a $\Delta^1_3$-definable wellorder of the reals.

Since $\PP_{\omega_3}$ is a finite support iteration, we have added Cohen reals along the iteration cofinally often. Thus for every real $a$ in $L^{\PP_{\omega_3}}$ there is a Cohen real over $L[a]$ and so by Proposition~\ref{PR1} in $L^{\PP_{\omega_3}}$ there are no
$\mathbf{\Sigma}^1_2$ m.o. families. Also note that since cofinally often we have added dominating reals, $L^{\PP_{\omega_3}}\vDash\mathfrak{b}=\omega_3$.

\section{$\Delta^1_3$ w.o. of the reals, a $\Pi^1_2$ m.o. family, no $\mathbf{\Sigma}^1_2$ m.o. families with $\mathfrak{c}=\omega_2$}

In this section we establish the proof of Theorem~\ref{TH2}. The model is obtained as a slight modification of the iteration construction developed in~\cite{VFSF}. We restate the definitions of the posets used in this construction. For a more detailed account of their properties see~\cite{VFSF}.
We work over the constructible universe $L$.

If $S\subseteq\omega_1$ is a stationary, co-stationary set, then by $Q(S)$ denote the poset of all countable closed subsets of $\omega_1\backslash S$ with the extension relation being end-extension. Recall that $Q(S)$ is $\omega_1\backslash S$-proper, $\omega$-distributive and adds a club disjoint from $S$
(see~\cite{VFSF},~\cite{MGOLD}). For the proof of Theorem~\ref{TH2} we use the form of localization defined in~\cite[Definition 1]{VFSF}. That is, if $X\subseteq\omega_1$ and $\phi(\omega_1,X)$ is a $\Sigma_1$-sentence with parameters $\omega_1,X$ which is true in all suitable models containing
$\omega_1$ and $X$ as elements, then $\LL(\phi)$ be the poset of all functions $r:|r|\to 2$, where the domain $|r|$ of $r$ is a countable limit ordinal, such that
\begin{enumerate}
{\item if $\gamma<|r|$ then $\gamma\in X\;\hbox{iff}\;r(2\gamma)=1$}
{\item if $\gamma\leq |r|$, $\M$ is a countable, suitable model containing $r\rest\gamma$ as an element and $\gamma=\omega_1^\M$,  then $\phi(\gamma, X\cap\gamma)$ holds in $\M$.}
\end{enumerate}
The extension relation is end-extension. Recall that $\LL(\phi)$ has a countably closed dense subset (see~\cite[Remark 2]{VFSF}) and that if $G$ is $\LL(\phi)$-generic and $\M$ is a countable suitable model containing $(\bigcup G)\rest \gamma$ as an element, where $\gamma=\omega_1^\M$, then
$\M\vDash\phi(\gamma,X\cap\gamma)$ (see~\cite[Lemma 2]{VFSF}).

We will use also the coding with perfect trees defined in~\cite[Definition 2]{VFSF}.  Let $Y\subseteq \omega_1$ be generic over $L$ such that in $L[Y]$ cofinalities have not been changed and let $\bar{\mu}=\{\mu_i\}_{i\in\omega_1}$ be a sequence of $L$-countable ordinals such that $\mu_i$ is the least $\mu>\sup_{j<i}\mu_j$, $L_\mu[Y\cap i]\vDash ZF^-$ and $L_\mu\vDash\omega\;\hbox{is the largest cardinal}$. Say that a real $R$ {\emph{codes}} $Y$ below $i$ if for all $j<i$, $j\in Y$ if and only if $L_{\mu_j}[Y\cap j, R]\vDash ZF^-$. For $T\subseteq 2^{<\omega}$ a perfect tree, let $|T|$ be the least $i$ such that $T\in L_{\mu_i}[Y\cap i]$. Then $\Ca(Y)$ is the poset of all perfect trees $T$ such that $R$ codes $Y$ below $|T|$, whenever $R$ is a branch through $T$, where for  $T_0,T_1$ conditions in $\Ca(Y)$, $T_0\leq T_1$ if and only if $T_0$ is a subtree of $T_1$. Recall also that $\Ca(Y)$
is proper and $^\omega\omega$-bounding (see~\cite[Lemmas 7,8]{VFSF}).

Fix a bookkeeping function $F:\omega_2\to L_{\omega_2}$ and a sequence $\vec{S}=(S_\beta:\beta<\omega_2)$ of almost disjoint stationary subsets of $\omega_1$, defined
as in~\cite[Lemma 14]{VFSF}. Thus $F$ and $\vec{S}$ are $\Sigma_1$-definable over $L_{\omega_2}$ with parameter $\omega_1$, $F^{-1}(a)$  is unbounded in $\omega_2$ for every $a\in L_{\omega_2}$ and whenever $\M,\N$ are suitable models such that $\omega_1^\M=\omega_1^\N$ then $F^\M, \vec{S}^\M$ agree
with $F^\N$, $\vec{S}^\N$ on $\omega_2^\M\cap\omega_2^\N$. Also if $\M$ is suitable and $\omega_1^\M=\omega_1$ then $F^\M, \bar{S}^\M$ equal the restrictions of $F$, $\vec{S}$ to the $\omega_2$ of $\M$. Fix also a stationary subset $S$ of $\omega_1$ which is almost disjoint from every element of $\vec{S}$.

Recursively we will define
a countable support iteration $\la \PP_\alpha,\dot{\QQ}_\beta:\alpha\leq\omega_2,\beta<\omega_2\ra$ and a sequence $\la O_\alpha:\alpha\in\omega_2\ra$, such that in $L^{\PP_{\omega_2}}$ there is a $\Delta^1_3$-definable wellorder of the reals and $O=\bigcup_{\alpha<\omega_2}O_\alpha$ is a maximal
family of orthogonal measures. Define the wellorder $<_\alpha$ in $L[G_\alpha]$ where $G_\alpha$ is $\PP_\alpha$-generic just as in~\cite{VFSF}. We can assume that all names for reals are nice and that for $\alpha<\beta<\omega_2$, all $\PP_\alpha$-names for reals precede in the canonical wellorder
$<_L$ of $L$ all $\PP_\beta$-names for reals, which are not $\PP_\alpha$-names. For each $\alpha<\omega_2$, define a wellorder $<_\alpha$ on the reals of $L[G_\alpha]$, where $G_\alpha$ is a $\PP_\alpha$-generic as follows. If $x$ is a real in $L[G_\alpha]$ let $\sigma^\alpha_x$ be the $<_L$-least $\PP_\gamma$-name for $x$, where $\gamma\leq\alpha$ is least
so that $x$ has a $\PP_\gamma$-name. For $x,y$ reals in $L[G_\alpha]$ define $x<_\alpha y$ if and only if $\sigma^\alpha_x<_L \sigma^\alpha_y$.
Note that whenever $\alpha<\beta$, then $<_\alpha$ is an initial segment of $<_\beta$.

We proceed with the definition of the poset. Let $\PP_0$ be the trivial poset. Suppose $\PP_\alpha$ and $\la O_\gamma:\gamma<\alpha\ra$ have been defined. Let $\dot{\QQ}_\alpha=\dot{\QQ}^0_\alpha*\dot{\QQ}^1_\alpha$ be a $\PP_\alpha$-name for a poset where $\dot{\QQ}_\alpha^0$ is a $\PP_\alpha$-name for the random real forcing and $\dot{\QQ}^1_\alpha$ is defined as follows:

{\emph{Case 1.}} If $F(\alpha)=\{\sigma^\alpha_x,\sigma^\alpha_y\}$ for some pair of reals $x,y$ in $L[G_\alpha]$, then define $\QQ_\alpha$ as in~\cite{VFSF}. That is $\QQ_\alpha$ is a three stage iteration $\KK^0_\alpha*\dot{\KK}_\alpha^1*\dot{\KK}_\alpha^2$ where:

\bigskip
\noindent
$(1)$ In $V^{\PP_\alpha*\dot{\QQ}_\alpha^0}$, $\KK^0_\alpha$ is the direct limit $\la \PP^0_{\alpha,n},\dot{\KK}^0_{\alpha,n}:n\in\omega\ra$, where  $\dot{\KK}^0_{\alpha,n}$ is a $\PP^0_{\alpha,n}$-name for $Q(S_{\alpha+2n})$ for $n\in x_\alpha*y_\alpha$, and
$\dot{\KK}^0_{\alpha,n}$ is a $\PP^0_{\alpha,n}$-name for $Q(S_{\alpha+2n+1})$ for $n\not\in x_\alpha*y_\alpha$.

\bigskip
\noindent
$(2)$ Let $G_\alpha^0$ be a $\PP_\alpha*\dot{\QQ}^0_\alpha$-generic filter and let $H_\alpha$ be a $\KK^0_\alpha$-generic over $L[G_\alpha^0]$. In $L[G_\alpha^0*H_\alpha]$ let $X_\alpha$ be a subset
of $\omega_1$ coding $\alpha$, coding the pair $(x_\alpha,y_\alpha)$, coding a level of $L$ in which $\alpha$ has size at most $\omega_1$ and coding the generic $G_\alpha^0*H_\alpha$, which we can regard as a subset of an element of $L_{\omega_2}$. Let $\KK_\alpha^1=\LL(\phi_\alpha)$ where $\phi_\alpha=\phi_\alpha(\omega_1,X)$ is the $\Sigma_1$-sentence which holds if and only if $X$ codes an ordinal $\bar{\alpha}<\omega_2$ and a pair $(x,y)$ such that  $S_{\bar{\alpha}+2n}$ is nonstationary for $n\in x*y$ and $S_{\bar{\alpha}+2n+1}$ is nonstationary for $n\not\in x*y$. Let $\dot{X}_\alpha$ be a $\PP^0_\alpha*\dot{\QQ}_\alpha^0*\dot{\KK}_\alpha^0$-name for $X_\alpha$ and let $\dot{\KK}_\alpha^1$ be a $\PP_\alpha^0*\dot{\QQ}_\alpha^0*\dot{\KK}_\alpha^0$-name for $\KK_\alpha^1$.

\bigskip
\noindent
$(3)$ Let $Y_\alpha$ be $\KK^1_\alpha$-generic over $L[G_\alpha^0*H_\alpha]$. Note that the even part of $Y_\alpha$-codes $X_\alpha$ and so codes the generic $G_\alpha^0*H_\alpha$. Then in $L[Y_\alpha]=L[G_\alpha^0*H_\alpha*Y_\alpha]$, let $\KK^2_\alpha=\Ca(Y_\alpha)$. Finally, let
$\dot{\KK}^2_\alpha$ be a $\PP_\alpha*\dot{\QQ}^0_\alpha*\dot{\KK}^0_\alpha*\dot{\KK}^1_\alpha$-name for $\KK^2_\alpha$.

{\emph{Case 2.}} If $F(\alpha)=\{\sigma^\alpha_x\}$ where $x$ is a code for a measure orthogonal to $\bigcup_{\gamma<\alpha} O_\gamma$, then let $\dot{\QQ}^1_\alpha$
be a $\PP_\alpha*\dot{\QQ}^1_\alpha$-name for $\KK^0_\alpha*\dot{\KK}^1_\alpha*\dot{\KK}^2_\alpha$ where in $L^{\PP_\alpha*\dot{\QQ}_\alpha}$, $\KK^0_\alpha$ is the direct limit $\la \PP^0_{\alpha,n},\dot{\QQ}^0_{\alpha,n}:n\in\omega\ra$
where $\dot{\QQ}^0_{\alpha,n}$ is a $\PP^0_{\alpha,n}$-name for $Q(S_{\alpha+2n})$ for every $n\in x$ and
a $\PP^0_{\alpha,n}$-name for $Q(S_{\alpha+2n+1})$ for every $n\notin x$. Define $\KK^1_\alpha$ and $\KK^2_\alpha$ just as in {\emph{Case 1}}. In $L^{\PP_\alpha*\QQ_\alpha}$ let $g=G(x,R_\alpha)$ be a code for a measure which is equivalent to $\mu_x$ and codes the real $R_\alpha$. Let $O_\alpha=\{\mu_g\}$.

In any other case, let $\QQ_\alpha$ be a $\PP_\alpha$-name for the trivial poset, $O_\alpha=\emptyset$. With this the definition of $\PP_{\omega_2}$ and the family $O=\bigcup_{\gamma<\omega_2}O_\alpha$ is complete.

\begin{clm} $O=\bigcup_{\gamma<\omega_2} O_\gamma$ is a maximal family of orthogonal measures in $P_c(2^\omega)$.
\end{clm}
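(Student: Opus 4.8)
The plan is to verify the two defining properties of $O$ separately, treating orthogonality first and maximality second, the latter by a reflection-and-bookkeeping argument.

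\emph{Orthogonality.} I would prove by induction on $\alpha$ that $\bigcup_{\gamma\le\alpha}O_\gamma$ is orthogonal. The only stages contributing a measure are those of Case 2, where $O_\alpha=\{\mu_g\}$ with $g=G(x,R_\alpha)$, so that $\mu_g\approx\mu_x$ by the Coding Lemma, while the Case 2 hypothesis says $\mu_x\perp\nu$ for every $\nu\in\bigcup_{\gamma<\alpha}O_\gamma$. Orthogonality is preserved by absolute equivalence: if $\mu\approx\mu'$ and a Borel set $B$ witnesses $\mu\perp\nu$, then $\mu'\ll\mu$ gives $\mu'(B)=0$, so $B$ witnesses $\mu'\perp\nu$ as well. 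Hence $\mu_g$ is orthogonal to every earlier member, and together with the inductive hypothesis this yields pairwise orthogonality through stage $\alpha$. Since orthogonality of two measures is an analytic ($\Sigma^1_1$) relation of their codes, it is absolute between $L[G_\alpha]$ and $L^{\PP_{\omega_2}}$ by Mostowski absoluteness, so every such relation holds in the final model and $O$ is orthogonal there.

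\emph{Maximality, reflection step.} Suppose toward a contradiction that in $L^{\PP_{\omega_2}}$ there is $x\in p_c(2^\omega)$ coding a measure $\mu_x$ orthogonal to every member of $O$. First I reflect $x$ to a bounded stage. Since $\PP_{\omega_2}$ is a countable support iteration of proper forcings over $L$ whose iterands all have size $\le\aleph_1$, so that the iteration satisfies the $\aleph_2$-c.c.\ while preserving CH — this follows from the properties of $Q(S)$, $\LL(\phi)$ and $\Ca(Y)$ recorded in \cite{VFSF} — a nice name for the real $x$ meets at most $\aleph_1$ conditions, each of countable support, and so is a $\PP_\alpha$-name for some $\alpha<\omega_2$; thus $x\in L[G_\alpha]$. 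Let $\sigma$ be the $<_L$-least $\PP_{\gamma_0}$-name for $x$, where $\gamma_0\le\alpha$ is least such that $x$ has a $\PP_{\gamma_0}$-name; then $\sigma\in L_{\omega_2}$ and $\sigma^\beta_x=\sigma$ for all $\beta\ge\gamma_0$. As $F^{-1}(\{\sigma\})$ is unbounded in $\omega_2$, I choose $\beta>\alpha$ with $F(\beta)=\{\sigma\}=\{\sigma^\beta_x\}$.

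\emph{Maximality, closing the loop.} Each member of $\bigcup_{\gamma<\beta}O_\gamma$, together with $x$, lies in $L[G_\beta]$, and $\mu_x$ is orthogonal to each of them in $L^{\PP_{\omega_2}}$; by the same $\Sigma^1_1$-absoluteness this orthogonality already holds in $L[G_\beta]$. Hence stage $\beta$ is governed by Case 2, so by construction $O_\beta=\{\mu_g\}$ with $\mu_g\approx\mu_x$ and $\mu_g\in O$. But two absolutely equivalent probability measures are never orthogonal: a Borel $B$ with $\mu_x(B)=0$ forces $\mu_g(B)=0$ by $\mu_g\ll\mu_x$, whence $\mu_g(2^\omega\setminus B)=1\ne 0$. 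Thus $\mu_x\not\perp\mu_g$, contradicting the choice of $x$, and $O$ is maximal.

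\emph{Main obstacle.} The delicate point is the reflection step, namely that every real of $L^{\PP_{\omega_2}}$ appears in some $L[G_\alpha]$ with $\alpha<\omega_2$. This rests on the $\aleph_2$-c.c.\ and properness of the countable support iteration and on the preservation of CH along it — exactly the features secured by the design of the iterands in \cite{VFSF} — together with the stabilization of the canonical names $\sigma^\beta_x$ and the fact that they lie in the range $L_{\omega_2}$ of the bookkeeping $F$. Once this capturing is in place, the analytic absoluteness of orthogonality and the unboundedness of each $F^{-1}(\{\sigma\})$ make both the orthogonality induction and the maximality bookkeeping routine.
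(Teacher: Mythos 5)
Your proof is correct and takes essentially the same route as the paper's: reflect the offending code $x$ to a bounded stage, pick $\beta$ with $F(\beta)=\{\sigma\}$ using the unboundedness of $F^{-1}(\{\sigma\})$, observe that Case 2 then places a measure equivalent to $\mu_x$ into $O_\beta$, and derive the contradiction. The paper leaves implicit the points you spell out (the orthogonality induction, the $\aleph_2$-c.c.\ capturing of reals at intermediate stages, and the absoluteness of orthogonality between $L[G_\beta]$ and the final model), so your additions are consistent elaboration rather than a different argument.
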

\begin{proof}
It is clear that $O$ is a family of orthogonal measures. It remains to verify its maximality. Suppose the contrary and
let $f$ be a code for a measure in $L[G]$ where $G$ is $\PP_{\omega_3}$-generic over $L$, which is orthogonal to all measures in $O$. Fix $\alpha$ minimal such that $f$ is in $L[G_\alpha]$
and let $\sigma$ be the $<_L$-least name for $f$. Since $F^{-1}(\sigma)$ is unbounded, there is $\beta\geq\alpha$ such that $F(\beta)=\{\sigma\}$.
Therefore $\QQ_\beta$ is nontrivial and $O_\beta=\{\mu_g\}$ for some measure $\mu_g$ which is equivalent to $\mu_f$, which is a contradiction.
\end{proof}

Clearly,{\emph{ $\mu\in O$ if and only if for every countable suitable model $\M$ such that $\mu\in \M$ there is $\alpha<\omega_2^\M$
such that $S_{\alpha+m}$ is nonstationary in $L[r(\mu)]^\M$ for every $m\in\Delta(r(\mu))$.}} Thus our family $O$ has indeed a $\Pi^1_2$ definition.
Just as in the proof of Theorem~\ref{TH1}, to obtain a $\Pi^1_2$-definable m.o. family in $L^{\PP_{\omega_3}}$ consider the union of $O$ with the set of all point measures.

Since for every real $a\in L^{\PP_{\omega_3}}$ there is a random real over $L$, by Proposition~\ref{PR1} in $L^{\PP_{\omega_3}}$ there are no $\mathbf{\Sigma}^1_2$ m.o. families. The bounding number $\mathfrak{b}$ remains $\omega_1$ in $L^{\PP_{\omega_3}}$, since the countable support iteration of $S$-proper $^\omega\omega$-bounding posets is $^\omega\omega$-bounding (see~\cite[Lemma 18]{VFSF} or~\cite{MGOLD}).\hfill\qedsymbol

\begin{remark}

In \cite{VFAT} the following question was raised:

\begin{question}
If there is a $\Pi^1_1$ m.o.~family, are all reals constructible?
\end{question}

This is to our knowledge still unsolved. T\"ornquist has recently shown that the existence of a $\Sigma^1_2$ m.o.~family implies the existence of a $\Pi^1_1$ m.o.~family, and that the existence of $\Sigma^1_2$ mad family implies the existence of a $\Pi^1_1$ mad family.

\end{remark}

\end{document}